\numberwithin{equation}{section}
\theoremstyle{plain}
\newtheorem{theorem}{Theorem}[section]
\newtheorem{lemma}[theorem]{Lemma}
\newtheorem{corollary}[theorem]{Corollary}
\newtheorem{proposition}[theorem]{Proposition}
\theoremstyle{definition}
\theoremstyle{remark}
\newtheorem{remark}[theorem]{Remark}
\newtheorem{case[theorem]}{Case}
\newcommand{\nothing}[1]{{}}
\title[Cardinalities of distance sets]{Distance sets of two subsets of   vector spaces over  finite fields}
\author{ Doowon Koh and Hae-Sang Sun}
\date{\today}
\address{Department of Mathematics\\
Chungbuk National University \\
Cheongju, Chungbuk 361-763 Korea} \email{koh131@chungbuk.ac.kr}
\address{Department of Mathematics\\
Chungbuk National University \\
Cheongju, Chungbuk 361-763 Korea} \email{haesang@chungbuk.ac.kr}
\thanks{Key words and phrases: Erd\H{o}s distance problem,  finite fields.\\
This research was supported by Basic Science Research Program through the National Research Foundation
of Korea(NRF) funded by the Ministry of Education, Science and Technology(2012010487, 20100023248) }
\subjclass[2010]{52C10, 11T23}
\begin{document}

\begin{abstract}
We investigate the size of the distance set determined by two subsets of  finite dimensional vector
spaces over  finite fields. A lower bound of the size is given explicitly in terms of cardinalities of
the two subsets. As a result, we improve upon the results by Rainer Dietmann \cite{Di12}. In the case
that one of the subsets is a product set, we obtain further improvement on the estimate.
\end{abstract}
\maketitle

\section{Introduction}
Let $E, F$ be finite subsets of $\mathbb R^d.$ The distance set determined by $E$ and $F$ is defined
by
$$\Delta(E, F) =\{ |x-y|: x\in E, y\in F\},$$
where $|\cdot|$ denotes the standard norm on $\mathbb R^d.$ Several attentions have been paid to
estimating the cardinality of distance set $\Delta(E, F).$ In the case of $E=F,$   Erd\H{o}s
\cite{Er46} first addressed this problem and  showed that
$$ |\Delta(E,E)|\gg|E|^{\frac{1}{d}}$$
where $|E|$ denotes the number of elements in $E$. Here we use $\gg$ conventionally. Tha is to say, there exists a $c>0$ such that $\Delta(E,E)>c |E|^{\frac{1}{d}}$ for all $E$. Taking the set $E$ as a piece of the integer lattice, the Erd\H{o}s distance conjecture
says that for every $\varepsilon>0,$ there exists a $c_{\varepsilon}>0$ such that
$$ |\Delta(E,E)|\geq c_{\varepsilon} |E|^{\frac{2}{d}-\varepsilon}.$$
In dimension two, the conjecture has recently been solved by Guth and Katz \cite{GK10}, who proved that
$$  |\Delta(E,E)|\gg \frac{|E|}{\log|E|}.$$
However, the Erd\H{o}s distance conjecture is still open for higher dimensions. See \cite{KT04},
\cite{SV04}, \cite{SV05}, and the references contained therein for recent developments on the
Erd\H{o}s distance problem in higher dimensions.

As an analog of the Euclidean Erd\H{o}s distance problem, Bourgain, Katz, and Tao \cite{BKT04} posed
and studied the finite field version of the Erd\H{o}s distance problem in two dimensions. The
Erd\H{o}s distance problem in the finite field setting has been recently studied by various
researchers (see \cite{CEHIK09}, \cite{IK08},  \cite{KS}, \cite{KS12}, \cite{V08}, \cite{Vi08}, and
\cite{Vu08}). Let $\mathbb F_q^d$ be a $d$-dimensional vector space over the finite field $\mathbb
F_q$ with $q$ elements. We shall always assume that the characteristic of $\mathbb F_q$ is greater
than two. In the finite field setting, given two sets $E, F\subset \mathbb F_q^d,$ the distance set is
defined similarly by
$$ \Delta(E, F)=\{\|x-y\|\in \mathbb F_q: x\in E, y\in F\},$$
where $\|\cdot\|$ is defined by $\|m\|=m_1^2+\dots+m_d^2$ for $m=(m_1,\dots, m_d)\in \mathbb F_q^d.$
Here, observe that the function $\|\cdot \|$ on $\mathbb F_q^d$ is not a norm, but the value is invariant under the rotations in $\mathbb F_q^d$.

Assuming that $E\subset \mathbb F_p^2$ with prime $p\equiv 3 (\mbox{mod}~4),$ the aforementioned
authors \cite{BKT04} proved that if $|E|\le p^{2-\varepsilon}$ for some $\varepsilon >0$, then there
exists a $\delta(\varepsilon)>0$ such that $|\Delta(E,E)|\gg
|E|^{\frac{1}{2}+\delta(\varepsilon)}.$ However, the value $\delta(\varepsilon)$ was not given in an
explicit form. Furthermore, this result can not be obtained for general finite fields, because one may take $E=\mathbb F_p\times \mathbb F_p$ for the prime field $\mathbb F_p$ of $\mathbb F_q.$

For general fields $\mathbb F_q,$  Iosevich and Rudnev  \cite{IR07} obtained results on lower bounds with explicit
exponents for the size of distance sets for $\mathbb F_q^d$, $d\geq 2$. More precisely, they proved
that if $E\subset \mathbb F_q^d$ and $ |E|\gg q^{\frac{d}{2}}$,
then
$$ |\Delta(E, E)|\gg\min\left\{ q, \frac{|E|}{q^{\frac{d-1}{2}}}\right\}.$$

In \cite{Sh06}, Shparlinski derived an explicit lower bound of the number of the distances between
arbitrary two sets: If $E, F\subset \mathbb F_q^d,$ then
\begin{equation}\label{Sh}
|\Delta(E,F)| > \frac{|E||F|q}{q^{d+1}+|E||F|}\geq
\frac{1}{2} \min \left\{ q, \frac{|E||F|}{q^d}\right\}.
\end{equation}
Dietmann \cite{Di12} recently
obtained a new lower bound for $|\Delta(E, F)|.$ In fact, he proved that if $E, F\subset \mathbb
F_q^d, |F|\geq |E|,$ and $|E||F|\geq (900+\log{q}) q^d,$ then
\begin{equation}
\label{Di} |\Delta(E,F)|\gg \left\{
\begin{array}
{ll}  \min\left\{ q, \frac{|F|}{q^{\frac{d-1}{2}} \log{q}} \right\}\quad &\mbox{for}~~d\geq 2\\
 \min\left\{ q, \frac{|E|^{\frac{1}{2}}|F|}{q \log{q}} \right\}\quad &\mbox{for}~~d=2
 \end{array}\right..
\end{equation}
In order to obtain an estimate on an average of a product of two spherical sums, he made use of the pigeonhole principle, which is a main reason for presence of the $\log q$ factor in (\ref{Di}). One might make a naive speculation that explicit Fourier analysis instead of the pigeonhole principle could remove the $\log q$ factor.

Another point worth of noting is that both results of Shparlinski and Dietmann are nontrivial only if
\begin{align}\label{the:cond}
|E||F|> q^{d}.
\end{align}
Here notice that the condition (\ref{the:cond}) is optimal for even $d$. In fact, if $d\geq 2$ is even
and $i^2=-1$ for some $i\in\mathbb F_q$, then setting $E=F$ and
\begin{equation}\label{exa}
E=\left\{ (t_1, it_1, \dots, t_j, it_j, \dots, t_{d/2}, i t_{d/2}) \in \mathbb F_q^d:  t_j\in \mathbb
F_q, j=1,\dots, d/2 \right\},
\end{equation}
it can be easily shown that $|E||F|=q^{d}$ and $|\Delta(E,F)|=1$. On the other hand, absence of such
an example for odd $d$ leads us to speculate that (\ref{the:cond}) might be relaxed further.

There are three aims of present paper. The first two of them are responses to the previous
speculations, which are consequences of main theorems in Section \ref{main:thms}. First of all, we
shall observe that Dietmann's result can be improved so that the $\log{q}$ factor can be eliminated in
(\ref{Di}). In fact, in Corollary \ref{bett:dietm}, we have : If $|E||F|\gg q^d$ and $|E|\leq |F|$,
then we have
\begin{align*}
|\Delta(E,F)|\gg
\begin{cases}
\min\left\{q,\frac{|F|}{q^{\frac{d-1}{2}}}\right\}&\mbox{ if }d\geq2\\
\min\left\{q,\frac{|E|^{\frac{1}{2}}|F|}{q}\right\}&\mbox{ if }d=2.
\end{cases}
\end{align*}
Secondly, we show that in certain cases, a condition milder than (\ref{the:cond}) assures
non-triviality of distance sets. For example, in Theorem \ref{main1}, we show that if $d\geq 3$ is odd
and $1\leq |E|< q^{\frac{d-1}{2}}$, then
$$|\Delta(E,F)|\geq \min \left\{ \frac{q}{2}, \frac{|E||F|}{8q^{d-1}}\right\}$$ and, therefore,
$|\Delta(E,F)|>1$ if $|E||F|>8q^{d-1}$. Finally, we also show in Theorem \ref{main3} that if one of
the two subsets of $\mathbb F_q$ is a product set, then much stronger lower bound for the size of
distance set is obtained.

For precise statements and more explanations, please refer to the theorems and remarks in Section
\ref{main:thms}

\section{Discrete Fourier analysis}

Iosevich and Rudnev \cite{IR07} adapted the discrete Fourier analysis to measure the size of distance
sets in the finite field setting. As a result, they developed a powerful machinery for deriving
results on the Erd\H{o}s distance problem. In this section, we review it and collect estimates on
several quantities, namely the Fourier transform of spheres, counting function of points with a given
distance, and spherical sums, which are involved in the lower bound for the distance set.

We begin with the definition of the Fourier transform. Given a function $f:\mathbb F_q^d \to \mathbb
C$, the {\it Fourier transform} of $f$ is given by the form
$$ \widehat{f}(m)=\frac{1}{q^d}\sum_{x\in \mathbb F_q^d} \chi(-m\cdot x) f(x)
\quad \mbox{for}~~m\in \mathbb F_q^d.$$ Here, and throughout this paper, we denote by $\chi$ a fixed
nontrivial additive character of $\mathbb F_q$. It can be easily checked that the results on the
distance problems are independent of the choice of the character. Recall that the {\it orthogonality
relation} of $\chi$ says that
$$ \sum_{x\in \mathbb F_q^d} \chi(m\cdot x) =\left \{
\begin{array}
{ll} 0\quad&\mbox{if}~~m\neq (0,\dots,0)\\
     q^d \quad&\mbox{if}~~m= (0,\dots,0)\end{array}\right..$$
The following {\it Fourier inversion formula} follows immediately from a direct application of the
orthogonality relation of $\chi$:
$$ f(x)=\sum_{m\in \mathbb F_q^d} \chi(m\cdot x) \widehat{f}(m).$$
The discrete version of {\it Plancherel's theorem} says that:
$$ \sum_{m\in \mathbb F_q^d} |\widehat{f}(m)|^2 = q^{-d} \sum_{x\in \mathbb F_q^d} |f(x)|^2.$$
From now on, by abuse of notations, we identify the symbol $E$ for a subset $E\subset \mathbb F_q^d$
with the characteristic function $\chi_{E}$ on $E$. Then the Plancherel theorem for $E$ is interpreted as
$$ \sum_{m\in \mathbb F_q^d} |\widehat{E}(m)|^2 = q^{-d}\sum_{x\in \mathbb F_q^d} |E(x)|^2= q^{-d}|E|.$$

Let us denote by $G, K, S$ the {\it Gauss sum}, {\it Kloosterman sum}, and {\it Sali\'e sum},
respectively. In other words, for $a, b\in \mathbb F_q^*,$ let us set
$$ G=\sum_{s\in\mathbb F_q^*} \eta(s) \chi(s),\quad
K=\sum_{s\in\mathbb F_q^*}  \chi(as+bs^{-1}), \quad\mbox{and} \quad S=\sum_{s\in\mathbb F_q^*} \eta(s)
\chi(as+bs^{-1}),$$ where $\eta$ denotes the quadratic character of $\mathbb F_q^*:=\mathbb
F_q\setminus\{0\}.$ It is well known that they satisfy
\begin{equation}\label{expsum}|G|=\sqrt{q}, ~~|K|\leq 2 \sqrt{q}, ~~\mbox{and}~~ |S|\leq 2 \sqrt{q}.\end{equation}
For proofs of estimates on these exponential sums,  see \cite[p.193]{LN97} and
\cite[pp.322-323]{IK04}.

For each $t\in \mathbb F_q$, we define a sphere with radius $t$ as the set
$$ S_t=\{x\in \mathbb F_q^d: \|x\|=t\}.$$
The Fourier transform on $S_t$ is closely related to aforementioned exponential sums.
It was proved in \cite{IK10} that for $t\in \mathbb F_q, m\in \mathbb F_q^d,$
\begin{equation}\label{Sf}
\widehat{S_t}(m) = q^{-1} \delta_0(m) + q^{-d-1}\eta^d(-1) G^d \sum_{r \in {\mathbb F}_q^*}
\eta^d(r)\chi\Big(tr+ \frac{\|m\|}{4r}\Big),
\end{equation}
where $\delta_0(m)=1$ if $m=(0, \ldots, 0)$ and $\delta_0(m)=0$ otherwise. Given two sets $E, F\subset
\mathbb F_q^d,$  we consider a counting function $\nu: \mathbb F_q \to \mathbb N \cup \{0\}$ defined
by
$$\nu(t)=|\{(x,y)\in E\times F: \|x-y\|=t\}|\quad \mbox{for}~~t\in \mathbb F_q.$$
For $E\subset \mathbb F_q^d$, let us set
$${\mathfrak M}(E):= \max_{r\in \mathbb F_q} \sum_{m\in S_r} |\widehat{E}(m)|^2,\mbox{ and }
{\mathfrak M}^*(E):= \max_{r\in \mathbb F_q^*} \sum_{m\in S_r} |\widehat{E}(m)|^2.$$  Note that $\sum_{m\in S_r} |\widehat{E}(m)|^2$ is a finite analog of {\it spherical average} $\int_{S^{d-1}}|\widehat{\mu}(t\omega)|^2d\omega$ in the classical Falconer distance problem. Here $S^{d-1}$ is a $(d-1)$-dimensional sphere and $\mu$ is a Borel measure. Refer to \cite{IR07} for the details.

The three quantities $\widehat{S_t}$, $\nu(t)$, and ${\mathfrak M}(E)$ are closely related by the
following lemma, (\ref{L1}), and  (\ref{L2}).
\begin{lemma}
\begin{equation}\label{vt}
\nu(t)=q^{2d}\sum_{m\in \mathbb F_q^d} \widehat{S_t}(m) \overline{\widehat{E}}(m)
\widehat{F}(m).
\end{equation}
\end{lemma}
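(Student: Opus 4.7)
The plan is a direct Fourier expansion of the counting function. First I would rewrite
\[
\nu(t)=\sum_{x,y\in\mathbb F_q^d} E(x)\,F(y)\,S_t(x-y),
\]
using the identification of $E$, $F$, and $S_t$ with their characteristic functions, so that each term contributes $1$ exactly when $x\in E$, $y\in F$, and $\|x-y\|=t$.

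Next I would invoke the Fourier inversion formula stated in the excerpt to expand
\[
S_t(x-y)=\sum_{m\in\mathbb F_q^d}\chi(m\cdot(x-y))\,\widehat{S_t}(m),
\]
then factor the character as $\chi(m\cdot(x-y))=\chi(m\cdot x)\,\chi(-m\cdot y)$. Substituting this into the expression for $\nu(t)$ and interchanging the order of summation decouples the $x$- and $y$-sums:
\[
\nu(t)=\sum_{m\in\mathbb F_q^d}\widehat{S_t}(m)\Bigl(\sum_{x\in\mathbb F_q^d}E(x)\chi(m\cdot x)\Bigr)\Bigl(\sum_{y\in\mathbb F_q^d}F(y)\chi(-m\cdot y)\Bigr).
\]

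Then I would identify each inner sum with a Fourier transform. By the definition $\widehat{F}(m)=q^{-d}\sum_y\chi(-m\cdot y)F(y)$, the second parenthesis equals $q^d\widehat{F}(m)$. Since $E$ is real-valued, the first parenthesis equals $q^d\,\overline{\widehat{E}(m)}$, because $\overline{\widehat{E}(m)}=q^{-d}\sum_x\chi(m\cdot x)E(x)$. Collecting the factors of $q^d$ yields exactly
\[
\nu(t)=q^{2d}\sum_{m\in\mathbb F_q^d}\widehat{S_t}(m)\,\overline{\widehat{E}}(m)\,\widehat{F}(m),
\]
as claimed.

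There is no real obstacle in this argument; it is a standard manipulation. The only point requiring slight care is the appearance of complex conjugation, which arises solely because the author's Fourier transform convention uses $\chi(-m\cdot x)$ while the inversion formula produces $\chi(m\cdot x)$; realness of the characteristic function $E$ converts one into the conjugate of the other.
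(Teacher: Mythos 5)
Your proposal is correct and follows exactly the paper's argument: expand $\nu(t)=\sum_{x,y}E(x)F(y)S_t(x-y)$, apply the Fourier inversion formula to $S_t(x-y)$, and recognize the resulting $x$- and $y$-sums as $q^d\,\overline{\widehat{E}}(m)$ and $q^d\,\widehat{F}(m)$ respectively. The handling of the conjugation via realness of the characteristic function is the same bookkeeping the paper leaves implicit in ``follows from the definition of the Fourier transform.''
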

\begin{proof}
This can be checked by applying the Fourier inversion formula to $S_t(x-y)$. For each $t\in \mathbb
F_q,$ we have
$$ \nu(t)=\sum_{x\in E, y\in F} S_t(x-y) =\sum_{x,y\in \mathbb F_q^d} E(x)F(y)
\sum_{m\in \mathbb F_q^d} \chi(m\cdot (x-y)) \widehat{S_t}(m).$$ Then (\ref{vt}) follows from the
definition of the Fourier transform.
\end{proof}

Now in the following subsections we collect estimates and expressions for the quantities $\widehat{S_t}(m)$, $\nu(t)$, and
$\mathfrak M(E)$, which are necessary to prove main theorems in Section \ref{main:thms}.

\subsection{Fourier transform of $S_t$}
Clearly by definition we have
\begin{align}\label{s:0}
\widehat{S_t}(0,\dots,0)=q^{-d}|S_t|\leq 2 q^{-1}\end{align} for $t\in \mathbb F_q$. We also have

\begin{proposition}\label{decay}
(1) Let $m\neq (0,0,\cdots,0)$. We have
\begin{align*}
|\widehat{S_t}(m)|&\leq
\begin{cases}
~~q^{-\frac{d}{2}}&\mbox{ if }d: even,~ t=0,\mbox{ and }\|m\|=0\\
2 q^{-\frac{d+1}{2}}&\mbox{
otherwise}
\end{cases}
\end{align*}
In particular, $|\widehat{S_t}(m)|\leq 2q^{-\frac{d+1}{2}}$ for all $t\in\mathbb F_q$ if $d$ is odd,
and for all $t\neq 0$ if $d$ is even.
(2) For $m, m^\prime \in \mathbb F_q^d,$ we have
$$ \sum_{t\in \mathbb F_q} \widehat{S_t}(m) \overline{\widehat{S_t}}(m^\prime)
=q^{-1} \delta_0(m)\delta_0(m^\prime) +
q^{-d-1} \sum_{s\in\mathbb F_q^*} \chi(s(\|m\|-\|m^\prime\|)).$$
\end{proposition}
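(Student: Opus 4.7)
The plan for part (1) is a direct application of formula (\ref{Sf}). Since $m\neq (0,\ldots,0)$, the $\delta_0$-term drops out and $|G^d|=q^{d/2}$ reduces the problem to bounding
$$T(t,m):=\sum_{r\in\mathbb F_q^*}\eta^d(r)\chi\Bigl(tr+\tfrac{\|m\|}{4r}\Bigr),$$
after which $|\widehat{S_t}(m)|=q^{-d/2-1}|T(t,m)|$. I would then split into cases according to the parity of $d$ and to whether $t$ or $\|m\|$ vanishes. When both $t$ and $\|m\|$ are nonzero, $T(t,m)$ is a Sali\'e sum if $d$ is odd and a Kloosterman sum if $d$ is even, each bounded by $2\sqrt{q}$ by (\ref{expsum}). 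When exactly one of $t,\|m\|$ vanishes, the change of variable $r\mapsto 1/(4r)$ turns $T(t,m)$ into a Gauss sum for $d$ odd (bounded by $\sqrt{q}$) or into $\sum_{s\in\mathbb F_q^*}\chi(as)=-1$ for $d$ even. Finally, when $t=0$ and $\|m\|=0$, the sum $\sum_r\eta^d(r)$ equals $0$ for $d$ odd and $q-1$ for $d$ even, producing the exceptional bound $q^{-d/2}$ recorded in the statement.

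For part (2), I would first observe that $\widehat{S_t}(m)$ is real: the substitution $x\mapsto -x$ in the definition of $\widehat{S_t}(m)$ combined with $\|-x\|=\|x\|$ gives $\overline{\widehat{S_t}(m)}=\widehat{S_t}(m)$. Thus it suffices to evaluate $\sum_t\widehat{S_t}(m)\widehat{S_t}(m')$. Writing $\widehat{S_t}(m)=q^{-1}\delta_0(m)+B_t(m)$, where $B_t(m):=q^{-d-1}\eta^d(-1)G^d\sum_{r\in\mathbb F_q^*}\eta^d(r)\chi(tr+\|m\|/(4r))$, and expanding the product, the pure $\delta_0$-term contributes $q^{-1}\delta_0(m)\delta_0(m')$ after summing over $t$, and each of the two cross terms contains an inner factor $\sum_t\chi(tr)$ with $r\in\mathbb F_q^*$, which vanishes by orthogonality of $\chi$.

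The main term $\sum_t B_t(m)B_t(m')$ is handled by interchanging summations and applying $\sum_t\chi(t(r+r'))=q\,\delta_0(r+r')$, which forces $r'=-r$ and collapses the double sum to
$$q^{-2d-1}\eta^{2d}(-1)G^{2d}\sum_{r\in\mathbb F_q^*}\eta^d(-r^2)\,\chi\Bigl(\tfrac{\|m\|-\|m'\|}{4r}\Bigr).$$
Combining the standard identity $G^2=\eta(-1)q$ (whence $G^{2d}=\eta^d(-1)q^d$) with $\eta^d(-r^2)=\eta^d(-1)$ makes all $\eta$-factors cancel, leaving the prefactor $q^{-d-1}$. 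A final change of variable $s=1/(4r)$ converts the remaining sum into $\sum_{s\in\mathbb F_q^*}\chi(s(\|m\|-\|m'\|))$, matching the claim.

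The main obstacle I anticipate is purely bookkeeping: keeping the $\eta^d(-1)$ signs and the $G^{2d}$ factor consistent in part (2), and in part (1) not overlooking any of the degenerate sub-cases that mix even/odd parity of $d$ with the vanishing of $t$ or $\|m\|$, since the bound $q^{-d/2}$ is achieved in exactly one of them and must not be lost.
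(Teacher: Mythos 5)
Your argument is correct and follows essentially the same route as the paper: part (1) reduces via (\ref{Sf}) and (\ref{expsum}) to bounding the Kloosterman/Sali\'e/Gauss-type sum $\sum_{r\in\mathbb F_q^*}\eta^d(r)\chi(tr+\|m\|/4r)$, with the single exceptional case $d$ even, $t=0$, $\|m\|=0$, and part (2) uses the same decomposition $\widehat{S_t}(m)=q^{-1}\delta_0(m)+R_t(m)$, orthogonality in $t$ to kill the cross terms and collapse the double sum, and the change of variable $s=1/(4r)$. The only difference is cosmetic: in (2) you avoid conjugation by observing that $\widehat{S_t}(m)$ is real and then invoke the classical identity $G^2=\eta(-1)q$, whereas the paper conjugates $R_t(m^\prime)$ directly so that only $|G|^2=q$ is needed; both are valid.
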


\begin{proof}
Observe that for any $d\geq 2$, we have
$$\left|\sum_{r \in {\mathbb F}_q^*}
\eta^d(r)\chi\Big(tr+ \frac{\|m\|}{4r}\Big)\right|\leq 2q^{\frac{1}{2}}$$ unless $d$ is even, $t=0$,
and $\|m\|=0$. The statement (1) is an immediate consequence from (\ref{expsum}) and (\ref{Sf}).

For $m\in \mathbb F_q^d, t\in \mathbb F_q,$ write $ \widehat{S_t}(m)=q^{-1} \delta_0(m)+ R_t(m),$
where $R_t(m)$ denotes the second term of the right-hand side in (\ref{Sf}). It follows that for $m,
m^\prime \in \mathbb F_q^d,$
\begin{align}
\sum_{t\in \mathbb F_q} \widehat{S_t}(m) \overline{\widehat{S_t}}(m^\prime) &= \sum_{t\in \mathbb F_q}
q^{-2} \delta_0(m)
\delta_0(m^\prime) + q^{-1}\delta_0(m) \sum_{t\in \mathbb F_q} \overline{R_t}(m^\prime)\label{lem1:1} \\
&\phantom{bbbbbb}+ q^{-1}\delta_0(m^\prime)\sum_{t\in \mathbb F_q} R_t(m) +\sum_{t\in \mathbb F_q}
R_t(m)\overline{R_t}(m^\prime).\notag
\end{align}
By the orthogonality relation for $\chi$, the sums in the second and third terms in (\ref{lem1:1})
vanish. Thus, (\ref{lem1:1}) is equal to
\begin{align*}
&q^{-1} \delta_0(m) \delta_0(m^\prime) + q^{-d-2} \sum_{s,s^\prime \in \mathbb F_q^*}
\eta^d(s)\overline{\eta}^d(s^\prime) \chi\left(\frac{\|m\|}{4s}-\frac{\|m^\prime\|}{4s^\prime}\right)
\sum_{t\in \mathbb F_q}\chi((s-s^\prime)t) \\
&=q^{-1} \delta_0(m) \delta_0(m^\prime) + q^{-d-1} \sum_{s\in \mathbb F_q^*} \chi \left(
\frac{\|m\|-\|m^\prime\|}{4s}\right).
\end{align*}
Then the statement (2) follows from an observation that a change of  variables, $\frac{1}{4s}\mapsto s,$
is a permutation on $\mathbb F_q^*$.
\end{proof}

\subsection{Counting function $\nu(t)$}
We investigate some properties of $\nu(t)$. The $l^2$ estimate of the counting function $\nu$ takes
the following form.
\begin{proposition}\label{Lem3}
For $E,F\subset \mathbb F_q^d$, we have
\begin{align}\label{L1}
\sum_{t\in \mathbb F_q} \nu^2(t)&\leq q^{-1}|E|^2|F|^2 + q^{2d}|F|{\mathfrak M}(E),\mbox{ and }\\
\label{L2} \sum_{t\in \mathbb F_q} \nu^2(t)&\leq q^{-1}|E|^2|F|^2 + q^{3d} \left|\sum_{m\in S_0}
\overline{\widehat{E}}(m) \widehat{F}(m)\right|^2 + q^{2d}|F| {\mathfrak M}^*(E).
\end{align}
\end{proposition}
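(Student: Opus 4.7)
The plan is to compute $\sum_t \nu(t)^2$ directly by plugging in the Fourier representation \eqref{vt} of $\nu(t)$, then use Proposition \ref{decay}(2) to collapse the $t$-sum, and finally apply Cauchy--Schwarz together with Plancherel.

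First, using \eqref{vt} and the fact that $\nu(t)$ is real,
\begin{equation*}
\sum_{t\in\mathbb F_q}\nu(t)^2 \;=\; q^{4d}\sum_{m,m'\in\mathbb F_q^d}\Bigl(\sum_{t\in\mathbb F_q}\widehat{S_t}(m)\overline{\widehat{S_t}}(m')\Bigr)\,\overline{\widehat{E}}(m)\widehat{F}(m)\widehat{E}(m')\overline{\widehat{F}}(m').
\end{equation*}
I substitute the identity of Proposition \ref{decay}(2). The $q^{-1}\delta_0(m)\delta_0(m')$ piece contributes precisely
$q^{4d-1}|\widehat E(0)|^2|\widehat F(0)|^2 = q^{-1}|E|^2|F|^2$, which is the first term in both \eqref{L1} and \eqref{L2}. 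The remaining piece comes from $q^{-d-1}\sum_{s\in\mathbb F_q^*}\chi(s(\|m\|-\|m'\|))$, which by orthogonality equals $q\,\mathbf{1}_{\|m\|=\|m'\|}-1$. The $-1$ contribution produces $-q^{3d-1}\bigl|\sum_m\widehat{E}(m)\overline{\widehat{F}}(m)\bigr|^2\le 0$ and can be discarded in the upper bound. The $q\mathbf{1}_{\|m\|=\|m'\|}$ contribution groups $m,m'$ by the common value $t:=\|m\|=\|m'\|$, giving exactly
\begin{equation*}
q^{3d}\sum_{t\in\mathbb F_q}\Bigl|\sum_{m\in S_t}\widehat{E}(m)\overline{\widehat{F}}(m)\Bigr|^2.
\end{equation*}

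To finish \eqref{L1}, I apply Cauchy--Schwarz on the inner sum, bound $\sum_{m\in S_t}|\widehat E(m)|^2$ uniformly by $\mathfrak M(E)$, and then sum the remaining factor in $t$ via Plancherel:
\begin{equation*}
\sum_{t\in\mathbb F_q}\sum_{m\in S_t}|\widehat F(m)|^2 \;=\; \sum_{m\in\mathbb F_q^d}|\widehat F(m)|^2 \;=\; q^{-d}|F|.
\end{equation*}
This yields the bound $q^{3d}\cdot\mathfrak M(E)\cdot q^{-d}|F| = q^{2d}|F|\,\mathfrak M(E)$, completing \eqref{L1}. For \eqref{L2}, I split the $t$-sum above into the $t=0$ term, which is kept as the middle term $q^{3d}\bigl|\sum_{m\in S_0}\overline{\widehat E}(m)\widehat F(m)\bigr|^2$, and the $t\neq 0$ part, to which I apply the same Cauchy--Schwarz/Plancherel argument but now use $\mathfrak M^*(E)$ in place of $\mathfrak M(E)$.

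There is no real obstacle: the only subtle point is recognizing that the ``$-1$'' part of $\sum_{s\in\mathbb F_q^*}\chi(s(\|m\|-\|m'\|))$ gives a non-positive contribution and can be dropped cleanly, and that separating $t=0$ is exactly what forces the extra middle term in \eqref{L2} but lets one use the strictly smaller quantity $\mathfrak M^*(E)$ (useful in even dimensions where $|S_0|$ and hence $\mathfrak M(E)$ may be considerably larger than $\mathfrak M^*(E)$).
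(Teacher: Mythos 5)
Your proposal is correct and follows essentially the same route as the paper: square the Fourier representation of $\nu(t)$, collapse the $t$-sum via Proposition \ref{decay}(2), drop the non-positive ``$-1$'' contribution, group the diagonal $\|m\|=\|m'\|$ part into spheres, and finish with Cauchy--Schwarz plus Plancherel, separating the $r=0$ sphere to get \eqref{L2} with ${\mathfrak M}^*(E)$. The only cosmetic difference (writing $\widehat{E}(m)\overline{\widehat{F}}(m)$ instead of $\overline{\widehat{E}}(m)\widehat{F}(m)$) is harmless since only the modulus enters.
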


\begin{proof}
Squaring both sides of (\ref{vt}) and summing over $t\in \mathbb F_q,$ we have
\begin{align}\label{nu:t:sq}
\sum_{t\in \mathbb F_q^d} \nu^2(t)&=q^{4d}\sum_{m,m^\prime\in \mathbb F_q^d} \overline{\widehat{E}}(m)
\widehat{F}(m)
 \widehat{E}(m^\prime) \overline{\widehat{F}}(m^\prime) \sum_{t\in \mathbb F_q}
 \widehat{S_t}(m) \overline{\widehat{S_t}}(m^\prime)
\end{align}
Since $\mathbb F_q^*=\mathbb F_q \setminus \{0\}$, it follows from Proposition \ref{decay} that
(\ref{nu:t:sq}) is equal to
\begin{align}
\frac{|E|^2|F|^2}{q} &+q^{3d-1} \sum_{m,m^\prime\in \mathbb F_q^d} \overline{\widehat{E}}(m)
\widehat{F}(m) \widehat{E}(m^\prime) \overline{\widehat{F}}(m^\prime)\sum_{s\in\mathbb F_q}
\chi(s(\|m\|-\|m^\prime\|))\label{lem3:1}\\\notag &-q^{3d-1} |\sum_{m\in \mathbb F_q^d}
\overline{\widehat{E}}(m) \widehat{F}(m) ~|^2.
\end{align}
Since the last term in (\ref{lem3:1}) is negative, applying the orthogonality relation for $\chi$, we
obtain that
$$\sum_{t\in \mathbb F_q^d} \nu^2(t)\leq \frac{|E|^2|F|^2}{q} +q^{3d}
\sum_{\|m\|=\|m^\prime\|} \overline{\widehat{E}}(m) \widehat{F}(m) \widehat{E}(m^\prime)
\overline{\widehat{F}}(m^\prime),$$ where the last summation is over $m,m'\in \mathbb F_q^d$ with
$\|m\|=\|m'\|$. This can be rewritten as
\begin{equation}\label{good}
\sum_{t\in \mathbb F_q^d} \nu^2(t)\leq \frac{|E|^2|F|^2}{q}+q^{3d} \sum_{r\in \mathbb F_q}
\left|\sum_{m\in S_r} \overline{\widehat{E}}(m) \widehat{F}(m)\right|^2 .
\end{equation}
Observe that
\begin{align}
\sum_{r\in \mathbb F_q} \left|\sum_{m\in S_r} \overline{\widehat{E}}(m) \widehat{F}(m)\right|^2&\leq
\sum_{r\in \mathbb F_q}
\left(\sum_{m\in S_r} |\widehat{E}(m)|^2\right) \left(\sum_{m\in S_r} |\widehat{F}(m)|^2\right)\notag\\
&\leq  {\mathfrak M}(E) \sum_{m\in \mathbb F_q^d} |\widehat{F}(m)|^2= q^{-d} {\mathfrak M}(E)|F|.
\label{lem3:2}
\end{align}
Then (\ref{L1}) follows from this inequality.

Note that (\ref{lem3:2}) is still valid even when ``$r\in\mathbb F_q$" and ${\mathfrak M}(E)$ are
replaced by ``$r\in\mathbb F_q^*$" and ${\mathfrak M}^*(E)$. Since $\mathbb F_q=\mathbb
F_q^*\cup\{0\}$, the inequality (\ref{L2}) is easily derived from (\ref{good}) and the variant of
(\ref{lem3:2}).
\end{proof}

In the next few paragraphs, we collect a number of lemmas on $\nu(0)$, which are going to be utilized
for the proof of Theorem \ref{main2}.

\begin{proposition}\label{evencor}
Suppose that $d\geq 2$ is even and $E, F\subset \mathbb F_q^d$ with $|E||F|\geq 16 q^d.$ Then we have
\begin{align*}
&(|E||F|-\nu(0))^2 \geq \frac{|E|^2|F|^2}{36},\mbox{ and }\\
&q^{3d}  \left|\sum\limits_{m\in S_0}
\overline{\widehat{E}}(m) \widehat{F}(m)\right|^2 -\nu^2(0)\leq q^{-1}|E|^2|F|^2.
\end{align*}
\end{proposition}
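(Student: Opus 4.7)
The plan is to first derive an explicit identity relating the spherical sum $\sum_{m\in S_0}\overline{\widehat E}(m)\widehat F(m)$ to the scalar quantities $\nu(0)$, $|E||F|$, and $|E\cap F|$, and then read off both inequalities from it.

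For the identity, I rewrite the spherical sum via Fourier duality as $q^{-d}\sum_{x\in E,y\in F}\widehat{S_0}(x-y)$. When $d$ is even, formula (\ref{Sf}) at $t=0$ simplifies considerably: $\eta^d\equiv 1$, $G^d = \omega q^{d/2}$ with $\omega := \eta(-1)^{d/2}\in\{\pm 1\}$, and the inner $r$-sum equals $q-1$ or $-1$ depending on whether $\|m\|=0$. Hence $\widehat{S_0}(z)$ takes only three distinct values. Partitioning $E\times F$ into the diagonal ($|E\cap F|$ pairs), off-diagonal pairs with $\|x-y\|=0$ ($\nu(0)-|E\cap F|$ pairs), and pairs with $\|x-y\|\neq 0$ ($|E||F|-\nu(0)$ pairs), and combining the three constant values with some cancellation, I expect the clean identity
$$q^{3d/2}\sum_{m\in S_0}\overline{\widehat E}(m)\widehat F(m) = \omega\bigl(\nu(0) - |E||F|/q\bigr) + q^{(d-2)/2}|E\cap F|.$$

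For part (b), set $\alpha := |E||F|/q$ and $\beta := q^{(d-2)/2}|E\cap F|$. Squaring the identity and subtracting $\nu^2(0)$ yields, after a difference-of-squares manipulation, the factorization
$$q^{3d}\Bigl|\sum_{m\in S_0}\overline{\widehat E}(m)\widehat F(m)\Bigr|^2 - \nu^2(0) = (\omega\beta - \alpha)\bigl(2\nu(0) + \omega\beta - \alpha\bigr).$$
The hypothesis $|E||F|\geq 16q^d$ together with $|E\cap F|\leq\sqrt{|E||F|}$ forces $\beta\leq\alpha/4$, so $\omega\beta - \alpha\leq 0$ regardless of the sign of $\omega$. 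A brief case split on the sign of $2\nu(0) + \omega\beta - \alpha$ then shows the product is either nonpositive or bounded above by $(\alpha+\beta)^2\leq(5\alpha/4)^2 < q\alpha^2 = |E|^2|F|^2/q$, giving the second inequality.

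For part (a), I solve the identity for $\nu(0)$ and obtain
$$\nu(0)\leq\tfrac{|E||F|}{q} + q^{3d/2}\Bigl|\sum_{m\in S_0}\overline{\widehat E}(m)\widehat F(m)\Bigr| + q^{(d-2)/2}|E\cap F|.$$
Cauchy--Schwarz and Plancherel give $q^{3d/2}\bigl|\sum_{m\in S_0}\overline{\widehat E}(m)\widehat F(m)\bigr|\leq q^{d/2}\sqrt{|E||F|}\leq |E||F|/4$, and $q^{(d-2)/2}|E\cap F|\leq|E||F|/(4q)$, both via the size hypothesis. Since $q\geq 3$ by the odd-characteristic assumption, these combine to $\nu(0)\leq 2|E||F|/3$, so $|E||F|-\nu(0)\geq |E||F|/3$, which comfortably gives $(|E||F|-\nu(0))^2\geq|E|^2|F|^2/9\geq|E|^2|F|^2/36$. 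The main obstacle is the bookkeeping needed to verify the clean identity in Step 1 -- tracking the three values of $\widehat{S_0}$ and the sign $\omega$ through the partition sum -- since both inequalities then follow from elementary algebra combined with the size hypothesis.
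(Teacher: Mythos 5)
Your proposal is correct and follows essentially the same route as the paper: your identity $q^{3d/2}\sum_{m\in S_0}\overline{\widehat E}(m)\widehat F(m)=\omega(\nu(0)-|E||F|/q)+q^{(d-2)/2}|E\cap F|$ is exactly the paper's expansion (\ref{v01}) of $\nu(0)$ via (\ref{Sf}) at $t=0$ for even $d$, once one notes that $\sum_{m\in\mathbb F_q^d}\overline{\widehat E}(m)\widehat F(m)=q^{-d}|E\cap F|$ and $G^d=\eta(-1)^{d/2}q^{d/2}$, and both arguments then finish by elementary estimates from $|E||F|\ge 16q^d$ and $q\ge 3$. The only difference is cosmetic: where the paper bounds the cross terms of $\nu^2(0)$ via Cauchy--Schwarz and Plancherel, you use the exact factorization $(\omega\beta-\alpha)(2\nu(0)+\omega\beta-\alpha)$ and a case split, which is valid and even gives the slightly sharper constant $1/9$ in the first inequality.
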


\begin{proof}
Since $d\geq 2$ is even, it follows from (\ref{vt}) and (\ref{Sf}) that
\begin{align}
\nu(0)&=q^{2d} \sum_{m\in \mathbb F_q^d}\overline{\widehat{E}}(m) \widehat{F}(m) \left(
q^{-1}\delta_0(m) +q^{-d-1} G^d \sum_{r\in \mathbb
F_q^*}\chi\left(\frac{\|m\|}{4r}\right)\right)\notag\\
&=q^{-1}|E||F| +q^{d-1}G^d \sum_{m\in \mathbb F_q^d} \overline{\widehat{E}}(m) \widehat{F}(m)
\left(\sum_{s\in \mathbb F_q}\chi(s\|m\|) -1\right)\notag\\
&=q^{-1}|E||F| + q^dG^d \sum_{m\in S_0} \overline{\widehat{E}}(m) \widehat{F}(m)-q^{d-1}G^d\sum_{m\in
\mathbb F_q^d} \overline{\widehat{E}}(m) \widehat{F}(m).  \label{v01}
\end{align}
Notice that from (\ref{v01}) we obtain
$$ |\nu(0)|\leq q^{-1}|E||F| +2 q^d |G|^d
\sum_{m\in \mathbb F_q^d} |\overline{\widehat{E}}(m)|~| \widehat{F}(m)|.$$ From the Cauchy-Schwarz
inequality and the Plancherel theorem, we have
\begin{align*}
\sum_{m\in \mathbb F_q^d} |\overline{\widehat{E}}(m)|~| \widehat{F}(m)|&\leq q^{-d} |E|^{\frac{1}{2}}
|F|^{\frac{1}{2}},
\end{align*}
and, therefore
\begin{equation*}
|\nu(0)|\leq q^{-1}|E||F| +2 q^{\frac{d}{2}} |E|^{\frac{1}{2}} |F|^{\frac{1}{2}}.
\end{equation*}
This inequality implies that
$$ |E||F|-\nu(0)\geq |E||F|-|\nu(0)|\geq (1-q^{-1}) |E||F| -2q^{\frac{d}{2}} |E|^{\frac{1}{2}} |F|^{\frac{1}{2}}.$$
Since $q\geq 3$ and $ |E||F|\geq 16 q^d,$ we see that that
$$ |E||F|-\nu(0)\geq \frac{2|E||F|}{3} -2q^{\frac{d}{2}} |E|^{\frac{1}{2}} |F|^{\frac{1}{2}} \geq \frac{|E||F|}{6} \geq 0.$$
The statement follows  immediately from this observation.

For the second statement, let us set $$M(E,F)=q^{-1}|E||F| -q^{d-1}G^d \sum\limits_{m\in \mathbb
F_q^d} \overline{\widehat{E}}(m) \widehat{F}(m)$$ for convenience of calculations. Plugging
(\ref{v01}) into $\nu^2(0)=\nu(0) \overline{\nu(0)}$ and expanding it,
we see that
\begin{align*}
\nu^2(0)=&q^{3d} \left|\sum\limits_{m\in S_0} \overline{\widehat{E}}(m) \widehat{F}(m)\right|^2 + q^d
G^d  \overline{M(E,F)}\sum\limits_{m\in S_0} \overline{\widehat{E}}(m) \widehat{F}(m)\\
&+ q^d \overline{G^d} M(E,F) \sum_{m\in S_0} \widehat{E}(m) \overline{\widehat{F}}(m)+ |M(E,F)|^2.
\end{align*}
Since $\nu^2(0)$ is a nonnegative integer and $|M(E,F)|^2\geq 0,$  the equality above  with
$|G|=q^{\frac{1}{2}}$ implies that
\begin{align*}
q^{3d}  \left|\sum\limits_{m\in S_0} \overline{\widehat{E}}(m) \widehat{F}(m)\right|^2 -\nu^2(0)\leq 2
q^{\frac{3d}{2}} |M(E,F)| \sum_{m\in \mathbb F_q^d} |\widehat{E}(m)| |\widehat{F}(m)|.
\end{align*}
Note that the second factor is  bounded by $q^{-d}|E|^{\frac{1}{2}}|F|^{\frac{1}{2}}$ using the Cauchy-Schwarz
inequality and the Plancherel theorem.
Using this estimate and the definition of $M(E,F)$, it is easy to see that
$$|M(E,F)|\leq q^{-1}|E||F| + q^{d-1}|G^d| \sum\limits_{m\in \mathbb F_q^d} |\widehat{E}(m)| |\widehat{F}(m)| \leq q^{-1}|E||F|+ q^{\frac{d}{2}-1} |E|^{\frac{1}{2}}|F|^{\frac{1}{2}}.$$
Putting  all estimates above together gives that
$$ q^{3d}  \left|\sum\limits_{m\in S_0} \overline{\widehat{E}}(m) \widehat{F}(m)\right|^2 -\nu^2(0) \leq 2q^{\frac{d}{2}-1} |E|^{\frac{3}{2}}|F|^{\frac{3}{2}} +2 q^{d-1} |E||F|.$$
A direct computation shows that if $|E||F|\geq 16 q^d,$ then R.H.S. of previous inequality is less
than or equal to $4 q^{\frac{d}{2}-1} |E|^{\frac{3}{2}}|F|^{\frac{3}{2}} \leq q^{-1} |E|^2|F|^2.$ This
completes the proof.
\end{proof}

\subsection{Spherical sums ${\mathfrak M}(E)$ and ${\mathfrak M}^*(E)$} The following lemma plays a crucial role in proving results in the case of dimension two.
The proof can be found in Chapman {\it et al} \cite[Lemma 4.4]{CEHIK09}.

\begin{lemma}\label{restriction}
If $E\subset \mathbb F_q^2,$ then one has
$$ {\mathfrak M}^*(E) \leq \sqrt{3} q^{-3} |E|^{\frac{3}{2}}.$$
\end{lemma}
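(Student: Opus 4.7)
My plan is to bound $\mathfrak M^*(E)$ by reducing the maximum to the $\ell^2$-norm of the spherical sums. Setting $a(r) := \sum_{m\in S_r}|\widehat E(m)|^2$, the elementary inequality
\begin{equation*}
\bigl(\max_{r\in\mathbb F_q^*}a(r)\bigr)^2 \;\leq\; \sum_{r\in\mathbb F_q^*}a(r)^2 \;\leq\; \sum_{r\in\mathbb F_q}a(r)^2
\end{equation*}
reduces the lemma to proving $\sum_{r\in\mathbb F_q}a(r)^2 \leq 3q^{-6}|E|^3$. (For very small $|E|$, where this reduction is wasteful, one instead bounds $a(r)$ directly by applying the triangle inequality to $a(r)=q^{-2}\sum_{x,y\in E}\widehat{S_r}(x-y)$ together with Proposition \ref{decay}(1), splitting the diagonal $x=y$ from the off-diagonal.)

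To evaluate $\sum_r a(r)^2$, I would proceed by the Fourier manipulation used in Proposition \ref{Lem3} with $F=E$: expand
\begin{equation*}
\sum_r a(r)^2 = \sum_{m,m'\in\mathbb F_q^2} |\widehat E(m)|^2|\widehat E(m')|^2\,\mathbf 1[\|m\|=\|m'\|]
\end{equation*}
and substitute the character orthogonality $\mathbf 1[\|m\|=\|m'\|] = q^{-1}\sum_t\chi(t(\|m\|-\|m'\|))$. For $t=0$ one picks up a Plancherel main term $q^{-5}|E|^2$; for $t\neq 0$ the inner character sum $\sum_m|\widehat E(m)|^2\chi(t\|m\|)$ is evaluated by completing the square, which in dimension two collapses via the identity $(\sum_{x\in\mathbb F_q}\chi(tx^2))^2 = \eta(-1)q$. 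Assembling,
\begin{equation*}
\sum_r a(r)^2 = q^{-5}|E|^2 + q^{-7}\sum_{t\in\mathbb F_q^*}|B(t)|^2, \qquad B(t) := \sum_{x,y\in E}\chi(t\|x-y\|).
\end{equation*}

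To control $\sum_{t\neq 0}|B(t)|^2$, I would apply Parseval on $\mathbb F_q$ to obtain $q\sum_s N(s)^2 - |E|^4$ where $N(s) := |\{(x,y)\in E\times E:\|x-y\|=s\}|$, and then estimate $\sum_s N(s)^2$ by substituting the Fourier inversion formula $N(s) = q^{-2}|S_s||E|^2 + q^4\sum_{m\neq 0}\widehat{S_s}(m)|\widehat E(m)|^2$. The squared main-term sum yields $\approx q^{-1}|E|^4$ (which cancels the $-|E|^4$ above after the $q^{-7}$ scaling), while the Fourier-error contribution is controlled by the decay $|\widehat{S_s}(m)|\leq 2q^{-3/2}$ from Proposition \ref{decay}(1) together with Plancherel, giving an $O(|E|^3)$ contribution with a constant strictly less than $3$.

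The main obstacle is the exceptional term $s=0$: when $\eta(-1)=1$, the null cone of $\|\cdot\|$ is the union of two lines in $\mathbb F_q^2$, and $N(0)$ can be as large as $|E|^2$ if $E$ concentrates on a light line. Fortunately this extremal situation forces $|E|\leq q$, and in that case a direct computation---parametrizing the light line to reduce $\sum_{m\in S_r}|\widehat E(m)|^2$ to a one-dimensional Plancherel estimate---shows $\sum_{m\in S_r}|\widehat E(m)|^2 \leq q^{-3}|E|$, comfortably inside the target $\sqrt 3 q^{-3}|E|^{3/2}$. Treating this extremal case separately and combining with the generic estimate pins down $\sum_r a(r)^2\leq 3q^{-6}|E|^3$, which yields the lemma.
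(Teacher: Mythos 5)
The paper does not prove this lemma at all --- it is quoted from Chapman \emph{et al.} \cite{CEHIK09} (their Lemma 4.4) --- so your argument has to stand on its own, and as outlined it does not. Your identities are correct up to
\begin{equation*}
\sum_{r\in\mathbb F_q}a(r)^2\;=\;q^{-5}|E|^2+q^{-6}\sum_{s\in\mathbb F_q}N(s)^2-q^{-7}|E|^4,
\end{equation*}
but the decisive step, namely $\sum_s N(s)^2\le q^{-1}|E|^4+O(|E|^3)$ with a small constant, is exactly where the content of the lemma lies, and the tools you invoke cannot deliver it. If you substitute the Fourier expansion of $N(s)$ and expand the square, then by Proposition \ref{decay}(2) the contribution of the squared error term is, up to lower-order corrections, precisely $\sum_r\bigl(\sum_{m\in S_r,\,m\ne0}|\widehat E(m)|^2\bigr)^2$ again --- the argument is circular. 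If instead you estimate the error pointwise by $|\widehat{S_s}(m)|\le 2q^{-3/2}$ (valid for $s\ne0$) together with Plancherel, you only get $|N(s)-q^{-2}|S_s||E|^2|\le 2q^{1/2}|E|$; squaring, summing over $s$ and multiplying by $q^{-6}$ this produces a term of size about $4q^{-4}|E|^2$, plus (absent further cancellation) a cross term of size about $q^{-11/2}|E|^3$, and both exceed the target $3q^{-6}|E|^3$ by factors of order $q^2/|E|$ and $q^{1/2}$ throughout the relevant range $q\lesssim|E|\ll q^2$. In other words, this route recovers nothing beyond the trivial Plancherel bound $\mathfrak M^*(E)\le q^{-2}|E|$.

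Independently, the reduction $(\max_{r\ne0}a(r))^2\le\sum_{r}a(r)^2$ is too lossy for your target. With the $r=0$ term included the claimed inequality $\sum_r a(r)^2\le 3q^{-6}|E|^3$ is simply false: if $-1$ is a square and $E$ is a line on the null cone, $|E|=q$, then all Fourier mass of $E$ lies on $S_0$, so $\sum_r a(r)^2=a(0)^2=q^{-2}$, while $3q^{-6}|E|^3=3q^{-3}$. Excluding $r=0$ does not rescue the plan: even granting the lemma, one only knows $\sum_{r\ne0}a(r)^2\le\mathfrak M^*(E)\,q^{-2}|E|\le\sqrt3\,q^{-5}|E|^{5/2}$, which is far larger than $q^{-6}|E|^3$ whenever $|E|\ll q^2$ --- exactly the range your direct fallback (which only covers $|E|\lesssim q$) does not reach. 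Moreover your dichotomy ``either $E$ lies on a light line, forcing $|E|\le q$, or the generic estimate applies'' does not cover ordinary sets such as a light line together with $q$ further points, where $N(0)\ge q^2$ but $|E|=2q$ and neither branch of your argument applies. What is missing is a genuinely two-dimensional input used pointwise in $t\ne0$ --- for instance that two distinct translates of a circle of nonzero radius meet in at most two points, i.e.\ an additive-energy/$L^2\to L^4$ extension estimate for the circle --- which is what produces the exponent $3/2$ and the constant $\sqrt3$ in \cite{CEHIK09}; no rearrangement of Plancherel identities combined with the Weil-type bound $|\widehat{S_s}(m)|\le 2q^{-3/2}$ will yield it.
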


For higher dimensions, we need:
\begin{proposition}\label{sph:sum}
For odd $d\geq3$, we have
$${\mathfrak M}(E) \leq \min\{ q^{-d}|E|,~~ 2q^{-d-1}|E|+ 2 q^{-\frac{3d+1}{2}}|E|^2\}.$$
For even $d\geq 2$, we have
$${\mathfrak M}^*(E) \leq \min\{ q^{-d}|E|,~~ 2q^{-d-1}|E|+ 2 q^{-\frac{3d+1}{2}}|E|^2\}.$$
\end{proposition}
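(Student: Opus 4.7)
The first bound ${\mathfrak M}(E)\leq q^{-d}|E|$ (and likewise for ${\mathfrak M}^*(E)$) is immediate: for any fixed $r$, one has $\sum_{m\in S_r}|\widehat E(m)|^2\leq \sum_{m\in \mathbb F_q^d}|\widehat E(m)|^2 = q^{-d}|E|$ by the Plancherel identity recalled in the Fourier analysis section. So the substance lies in the second bound.

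The plan for the second bound is to open up $|\widehat E(m)|^2$ and swap the order of summation. Writing $|\widehat E(m)|^2 = q^{-2d}\sum_{x,y\in E}\chi(m\cdot(y-x))$ and using that $\sum_{m\in S_r}\chi(m\cdot z) = q^d\,\widehat{S_r}(-z)$ (together with the symmetry $S_r=-S_r$ which gives $\widehat{S_r}(-z)=\widehat{S_r}(z)$), one obtains the identity
\begin{equation*}
\sum_{m\in S_r}|\widehat E(m)|^2 \;=\; q^{-d}\sum_{x,y\in E}\widehat{S_r}(y-x).
\end{equation*}
I would then split this sum according to whether $x=y$ or $x\neq y$.

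For the diagonal $x=y$, the contribution is $q^{-d}|E|\,\widehat{S_r}(0) = q^{-2d}|E|\,|S_r|$. A direct check using formula (\ref{Sf}) at $m=0$, together with the Gauss/quadratic character estimates from (\ref{expsum}) and the orthogonality of $\chi$, yields $\widehat{S_r}(0)\leq 2q^{-1}$ (equivalently $|S_r|\leq 2q^{d-1}$) in every case needed. This gives the first term $2q^{-d-1}|E|$.

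For the off-diagonal $x\neq y$, each vector $y-x$ is nonzero, and I would invoke Proposition \ref{decay}(1), which gives $|\widehat{S_r}(y-x)|\leq 2q^{-(d+1)/2}$ precisely in the two regimes under consideration: all $r\in\mathbb F_q$ when $d$ is odd, and all $r\in\mathbb F_q^\ast$ when $d$ is even. Bounding the off-diagonal sum by the triangle inequality gives
\begin{equation*}
\Bigl| q^{-d}\sum_{x\neq y\in E}\widehat{S_r}(y-x)\Bigr| \;\leq\; q^{-d}\cdot |E|(|E|-1)\cdot 2q^{-(d+1)/2} \;\leq\; 2q^{-(3d+1)/2}|E|^2,
\end{equation*}
yielding the second term. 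Adding the two contributions and taking the maximum over the appropriate range of $r$ gives the stated estimate for ${\mathfrak M}(E)$ (odd $d$) and ${\mathfrak M}^\ast(E)$ (even $d$). The only delicate point is the case $d$ even with $r=0$: here the decay estimate breaks down precisely at isotropic vectors $m$ with $\|m\|=0$, which is exactly why the even-dimensional statement must be phrased in terms of ${\mathfrak M}^\ast$ rather than ${\mathfrak M}$.
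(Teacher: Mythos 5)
Your proposal is correct and follows essentially the same route as the paper: Plancherel for the trivial bound, then expanding $\sum_{m\in S_r}|\widehat E(m)|^2=q^{-d}\sum_{x,y\in E}\widehat{S_r}(x-y)$, bounding the diagonal via $\widehat{S_r}(0,\dots,0)\leq 2q^{-1}$ (the paper's (\ref{s:0})) and the off-diagonal via Proposition \ref{decay}(1), with the same observation that the even-dimensional failure of decay at $r=0$ forces the use of ${\mathfrak M}^*(E)$.
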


\begin{proof}
For each $r\in \mathbb F_q,$ the Plancherel theorem yields $\sum\limits_{m\in S_r} |\widehat{E}(m)|^2 \leq
\sum\limits_{m\in \mathbb F_q^d} |\widehat{E}(m)|^2=q^{-d}|E|.$ Hence we obtain that
\begin{equation}\label{trivial1}
{\mathfrak M}(E)\leq q^{-d}|E|.
\end{equation}
From the definition of the Fourier transform,  it follows that for each $r\in \mathbb F_q,$
\begin{align*}
\sum\limits_{m\in S_r} |\widehat{E}(m)|^2&=q^{-d}\sum_{x,y\in E} \widehat{S_r}(x-y)\\
&=q^{-d} |E| \widehat{S_r}( 0,\dots,0) + q^{-d}\sum_{x,y\in E: x\neq y} \widehat{S_r}(x-y).
\end{align*}
For odd $d\geq 3$, we see  from (\ref{s:0}) and Proposition \ref{decay} (1) that
$${\mathfrak M}(E)\leq 2q^{-d-1}|E|+ 2 q^{-\frac{3d+1}{2}}|E|^2.$$
Combining with (\ref{trivial1}), this estimate yields the first statement.

Finally, similar arguments with the
above give the same upper bound for ${\mathfrak M}^*(E)$.
\end{proof}
When $E$ is of a special form, namely a product set, we have a stronger bound.
\begin{proposition}\label{productpro}
Let $E=\underline{E} \times A \subset \mathbb F_q^{d-1}\times \mathbb F_q.$ Then $\mathfrak M(E)\leq
2q^{-d-1}|A|^2|\underline{E}|$.
\end{proposition}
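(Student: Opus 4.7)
The plan is to exploit the product structure by factorizing the Fourier transform. Writing $m=(\underline{m},m_d)\in \mathbb F_q^{d-1}\times \mathbb F_q$, the definition of the Fourier transform gives
\begin{equation*}
\widehat{E}(m)=\widehat{\underline{E}}(\underline{m})\,\widehat{A}(m_d),
\end{equation*}
where $\widehat{\underline{E}}$ and $\widehat{A}$ are the Fourier transforms on $\mathbb F_q^{d-1}$ and $\mathbb F_q$ respectively (with the corresponding normalizations). Consequently $|\widehat{E}(m)|^2=|\widehat{\underline{E}}(\underline{m})|^2|\widehat{A}(m_d)|^2$.

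First I would apply the trivial bound $|\widehat{A}(m_d)|\le q^{-1}|A|$, which follows from the triangle inequality. This reduces the problem to estimating $\sum_{m\in S_r}|\widehat{\underline{E}}(\underline{m})|^2$ uniformly in $r\in \mathbb F_q$, gaining a factor $q^{-2}|A|^2$ in front. The key observation is now geometric rather than analytic: for any fixed $\underline{m}\in \mathbb F_q^{d-1}$, the equation $m_d^2=r-\|\underline{m}\|$ has at most two solutions $m_d\in \mathbb F_q$, since the characteristic is not two. Therefore, summing over the fiber,
\begin{equation*}
\sum_{m\in S_r}|\widehat{\underline{E}}(\underline{m})|^2\le 2\sum_{\underline{m}\in \mathbb F_q^{d-1}}|\widehat{\underline{E}}(\underline{m})|^2 = 2q^{-(d-1)}|\underline{E}|,
\end{equation*}
where the last equality is Plancherel on $\mathbb F_q^{d-1}$.

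Combining the two bounds gives $\sum_{m\in S_r}|\widehat{E}(m)|^2\le 2q^{-d-1}|A|^2|\underline{E}|$, and since the estimate is uniform in $r$, taking the maximum over $r\in \mathbb F_q$ yields the desired bound on $\mathfrak M(E)$. I do not anticipate a substantive obstacle here: the argument is essentially a two-line Fubini-type manipulation, with the only subtlety being to route the savings through the last coordinate (where we use the trivial pointwise bound and the at-most-two fiber count) rather than through the $(d-1)$-dimensional factor (where Plancherel is sharp).
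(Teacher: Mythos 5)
Your argument is correct and is essentially the paper's own proof: factorize $\widehat{E}(\underline{m},m_d)=\widehat{\underline{E}}(\underline{m})\widehat{A}(m_d)$, bound $|\widehat{A}(m_d)|\leq q^{-1}|A|$, note that each $\underline{m}$ contributes at most two values of $m_d$ with $m_d^2=r-\|\underline{m}\|$, and finish with Plancherel in dimension $d-1$. The only differences are cosmetic (order of applying the two bounds, and the remark about characteristic two, which is not actually needed since a quadratic has at most two roots in any field).
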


\begin{proof}
We see from the definition of the Fourier transform that for $m=(\underline m, m_d)\in \mathbb
F_q^{d-1}\times \mathbb F_q,$
$$ \widehat{E}(m) =\widehat{\underline{E}\times A}(\underline{m}, m_d) =\widehat{\underline{E}}(\underline{m}) \widehat{A}(m_d),$$
where $\widehat{\underline{E}}(\underline{m}):=q^{-(d-1)} \sum\limits_{\underline{x}\in \mathbb
F_q^{d-1}} \chi(-\underline{m}\cdot \underline{x})\underline{E}(\underline{x}), $ and
$\widehat{A}(m_d):= q^{-1} \sum\limits_{s\in \mathbb F_q} \chi(-s\cdot m_d) A(s).$ Then, for each
$r\in \mathbb F_q,$ we can write
$$\sum\limits_{m\in S_r} |\widehat{E}(m)|^2 = \sum_{\underline{m} \in \mathbb F_q^{d-1}}
\left|\widehat{\underline{E}}(\underline{m})\right|^2 \left( \sum_{m_d\in \mathbb F_q:
m_d^2=r-\|\underline{m}\|} |\widehat{A}(m_d)|^2\right).  $$ Since $ |\widehat{A}(m_d)|\leq
|\widehat{A}(0)|=q^{-1}|A|$ for all $m_d\in \mathbb F_q,$  and $|\{m_d\in \mathbb F_q:
m_d^2=r-\|\underline{m}\|\}|\leq 2$ for each $r\in \mathbb F_q, \underline{m}\in \mathbb F_q^{d-1},$
we see that
$$\sum\limits_{m\in S_r} |\widehat{E}(m)|^2\leq 2 q^{-2} |A|^2 \sum_{\underline{m}
\in \mathbb F_q^{d-1}}\left|\widehat{\underline{E}}(\underline{m})\right|^2=2q^{-d-1}|A|^2
|\underline{E}|,$$ where the last equality follows from the Plancherel theorem in dimension $(d-1).$
\end{proof}

\section{Distance sets: Main results}\label{main:thms}

Now, we review  standard distance formulas which were originally due to Iosevich and Rudnev
\cite{IR07}.

Basic inequalities the distance function enjoys are as follows.
\begin{lemma}\label{Lem2} Let $E, F\subset \mathbb F_q^d, d\geq 2.$ Then we have
\begin{align}\label{Df1} |\Delta(E,F)|&\geq
\frac{|E|^2|F|^2}{\sum\limits_{t\in \mathbb F_q} \nu^2(t)},\mbox{ and }\\
\label{Df2} |\Delta(E,F)|&\geq \frac{(|E||F|-\nu(0))^2}{\sum\limits_{t\in \mathbb F_q^*} \nu^2(t)}.
\end{align}
\end{lemma}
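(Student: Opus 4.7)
The plan is to derive both inequalities from a single application of the Cauchy--Schwarz inequality to the counting function $\nu$, exploiting the basic observations that (i) $\sum_{t\in\mathbb F_q}\nu(t)=|E||F|$, and (ii) the support of $\nu$ is, by definition, exactly $\Delta(E,F)$. So we are essentially bounding an $\ell^1$-norm by $(\text{size of support})^{1/2}$ times an $\ell^2$-norm.

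For (\ref{Df1}), I would write
\[
|E|^2|F|^2=\Bigl(\sum_{t\in\mathbb F_q}\nu(t)\Bigr)^2=\Bigl(\sum_{t\in\Delta(E,F)}\nu(t)\Bigr)^2
\leq |\Delta(E,F)|\cdot\sum_{t\in\mathbb F_q}\nu^2(t),
\]
where the first equality is the total count of pairs, the second uses that $\nu$ vanishes off $\Delta(E,F)$, and the inequality is Cauchy--Schwarz. Rearranging yields (\ref{Df1}).

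For (\ref{Df2}), the modification is to separate out the contribution from $t=0$, since for many applications (e.g.\ Proposition \ref{evencor}) $\nu(0)$ is too large to control and must be removed. Let $\Delta^*:=\Delta(E,F)\setminus\{0\}$; then $\{t\in\mathbb F_q^*:\nu(t)>0\}\subseteq\Delta^*$, so its cardinality is at most $|\Delta(E,F)|$. Applying Cauchy--Schwarz to the restricted sum gives
\[
(|E||F|-\nu(0))^2=\Bigl(\sum_{t\in\mathbb F_q^*}\nu(t)\Bigr)^2\leq |\Delta(E,F)|\cdot\sum_{t\in\mathbb F_q^*}\nu^2(t),
\]
and dividing produces (\ref{Df2}).

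There is no real obstacle here; both inequalities are one-line Cauchy--Schwarz estimates, and the only subtlety worth spelling out is that in the second inequality we bound the support of $\nu|_{\mathbb F_q^*}$ by the full cardinality $|\Delta(E,F)|$ rather than $|\Delta(E,F)|-1$, which is harmless and keeps the statement uniform in whether or not $0\in\Delta(E,F)$.
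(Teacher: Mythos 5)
Your proof is correct and follows essentially the same route as the paper: both identities $\sum_{t\in\mathbb F_q}\nu(t)=|E||F|$ and $\sum_{t\in\mathbb F_q^*}\nu(t)=|E||F|-\nu(0)$, combined with Cauchy--Schwarz over the support of $\nu$ (which lies in $\Delta(E,F)$), give (\ref{Df1}) and (\ref{Df2}) exactly as in the paper's argument.
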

\begin{proof} Since $|E||F|=\sum_{t\in \mathbb F_q} \nu(t)$ and  $|E||F|-\nu(0)=\sum_{t\in \mathbb F_q^*} \nu(t),$  the Cauchy-Schwarz inequality yields
$$ |E|^2|F|^2=\left(\sum_{t\in \mathbb F_q}\nu(t) \right)^2\leq |\Delta(E,F)| \sum_{t\in \mathbb F_q} \nu^2(t),$$ and
$$ (|E||F|-\nu(0))^2 =\left(\sum_{t\in \mathbb F_q^*}\nu(t) \right)^2\leq |\Delta(E,F)| \sum_{t\in \mathbb F_q^*} \nu^2(t).$$
Thus, (\ref{Df1}) and (\ref{Df2}) follow immediately from these observations.
\end{proof}
\begin{remark} As we shall see, the inequality (\ref{Df1}) is used to prove our distance results in odd dimensions.
On the other hand,  the inequality (\ref{Df2}) is useful in even dimensional case. Iosevich and Rudnev
\cite{IR07} and Dietmann \cite{Di12} made use of the formula (\ref{Df2}) to derive distance results.
Consequently, they obtained the nontrivial distance results in the case when $|E||F|\gg q^d.$ In
this paper we want to point out that if the dimension $d\geq 3$ is odd, then the formula (\ref{Df1})
enables us to yield nontrivial results whenever $|E||F|\gg q^{d-1}.$ In \cite{Di12}, Dietmann
obtained the result in (\ref{Di}) by estimating $\sum_{t\in \mathbb F_q^*} \nu^2(t).$ To the end, he
applied the pigeonhole principle so that his result contains the $\log{q}$ factor. However, our main results below show that the $\log{q}$ factor can be removed.
\end{remark}

\begin{theorem}\label{main1}
If $d\geq 3$ is odd, and $E, F\subset \mathbb F_q^d,$ then
$$ |\Delta(E,F)|\geq \left\{ \begin{array}{ll} \min \left\{ \frac{q}{2}, \frac{|E||F|}{8q^{d-1}}\right\}
\quad &\mbox{if} ~~ 1\leq |E|< q^{\frac{d-1}{2}}\\
\min \left\{ \frac{q}{2}, \frac{|F|}{8q^{\frac{d-1}{2}}}\right\}\quad &\mbox{if} ~~ q^{\frac{d-1}{2}}
\leq |E|< q^{\frac{d+1}{2}}\\
\min \left\{ \frac{q}{2}, \frac{|E||F|}{2q^{d}}\right\}\quad &\mbox{if} ~~q^{\frac{d+1}{2}} \leq
|E|\leq q^{d} \end{array}\right. .
$$
\end{theorem}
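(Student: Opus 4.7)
The plan is to combine the distance inequality (\ref{Df1}) of Lemma \ref{Lem2} with the $\ell^2$ moment bound (\ref{L1}) of Proposition \ref{Lem3} and the two estimates on ${\mathfrak M}(E)$ given by Proposition \ref{sph:sum}. Substituting (\ref{L1}) into (\ref{Df1}) yields
\[
|\Delta(E,F)| \;\geq\; \frac{|E|^2|F|^2}{q^{-1}|E|^2|F|^2 + q^{2d}|F|\,{\mathfrak M}(E)} \;\geq\; \frac{1}{2}\min\!\left\{q,\,\frac{|E|^2|F|}{q^{2d}\,{\mathfrak M}(E)}\right\},
\]
where the second step is the elementary inequality $(a+b)^{-1}\geq \tfrac12\min\{a^{-1},b^{-1}\}$ for $a,b>0$. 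All that remains is to choose, in each regime for $|E|$, the sharper of the two available upper bounds ${\mathfrak M}(E)\leq q^{-d}|E|$ and ${\mathfrak M}(E)\leq 2q^{-d-1}|E|+2q^{-(3d+1)/2}|E|^2$.

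In the small regime $1\leq |E|<q^{(d-1)/2}$ the first term of the second bound dominates (since $q^{-(3d+1)/2}|E|^2<q^{-d-1}|E|$ there), giving ${\mathfrak M}(E)\leq 4q^{-d-1}|E|$ and hence $\tfrac{|E|^2|F|}{q^{2d}\,{\mathfrak M}(E)}\geq \tfrac{|E||F|}{4q^{d-1}}$. In the intermediate regime $q^{(d-1)/2}\leq |E|<q^{(d+1)/2}$ the second term now dominates, yielding ${\mathfrak M}(E)\leq 4q^{-(3d+1)/2}|E|^2$ and the bound $\tfrac{|F|}{4q^{(d-1)/2}}$. Finally, in the large regime $q^{(d+1)/2}\leq |E|\leq q^d$ the comparison $q^{-(3d+1)/2}|E|^2\geq q^{-d-1}|E|$ shows that the trivial Plancherel bound ${\mathfrak M}(E)\leq q^{-d}|E|$ is the sharper one, producing $\tfrac{|E||F|}{q^d}$. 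Feeding these three estimates into the display above, together with the prefactor $\tfrac12$, reproduces exactly the three cases of the theorem with the stated constants $\tfrac{q}{2}$, $\tfrac{|E||F|}{8q^{d-1}}$, $\tfrac{|F|}{8q^{(d-1)/2}}$, and $\tfrac{|E||F|}{2q^d}$.

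There is no real obstacle in the argument: it is essentially a dyadic bookkeeping once Propositions \ref{Lem3} and \ref{sph:sum} are at hand, with the thresholds $|E|\sim q^{(d-1)/2}$ and $|E|\sim q^{(d+1)/2}$ arising simply by balancing the relevant pairs of estimates on ${\mathfrak M}(E)$. The one conceptual choice — and the genuinely new idea compared with \cite{IR07} and \cite{Di12} — has already been made at the level of the distance formula: using (\ref{Df1}), which keeps the $t=0$ term, in place of (\ref{Df2}) is what allows the argument to go below the customary threshold $|E||F|\gg q^d$. This is possible precisely because Proposition \ref{decay}(1) gives uniform Fourier decay on $S_t$ for all $t\in\mathbb F_q$ in odd dimensions, so no exceptional term for $t=0$ needs to be split off as in the even-dimensional setting addressed in Theorem \ref{main2}.
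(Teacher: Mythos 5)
Your proposal is correct and follows essentially the same route as the paper: it combines (\ref{Df1}) with (\ref{L1}) to get the bound $|\Delta(E,F)|\geq |E|^2|F|^2/\bigl(q^{-1}|E|^2|F|^2+q^{2d}|F|\,{\mathfrak M}(E)\bigr)$, then applies the same three-regime case analysis of Proposition \ref{sph:sum} for odd $d$ and the elementary $\min$-splitting of the denominator. The thresholds, constants, and final bounds all match the paper's "direct computation."
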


\begin{proof}
Combining (\ref{Df1}) with (\ref{L1}),  we see that
\begin{equation}\label{formulaall}
|\Delta(E,F)|\geq \frac{|E|^2|F|^2}{q^{-1}|E|^2|F|^2 + q^{2d}|F| {\mathfrak M}(E)}.
\end{equation}
Since $d$ is odd, from Proposition \ref{sph:sum}, we have
\begin{align*}
{\mathfrak M}(E) \leq \left\{\begin{array}
{ll} 4 q^{-d-1}|E| \quad &\mbox{if}~~ 1\leq |E|< q^{\frac{d-1}{2}}\\
4 q^{-\frac{3d+1}{2}} |E|^2 \quad &\mbox{if}~~q^{\frac{d-1}{2}}\leq |E|< q^{\frac{d+1}{2}}\\
q^{-d} |E|  &\mbox{if}~~q^{\frac{d+1}{2}}\leq |E| \leq q^d. \end{array} \right.
\end{align*}
After combining  (\ref{formulaall}) with this estimate, a direct computation enables us to finish the proof of
Theorem \ref{main1}.
\end{proof}

\begin{remark} It is not hard to see that Theorem \ref{main1} is stronger than Diemann's
result \eqref{Di} for $d\ge 3$ odd (see Corollary \ref{bett:dietm} below).
In addition, notice that  Theorem \ref{main1} improves Shparlinski's result (\ref{Sh}) in the case when
  $d\geq 3$ is odd and $1\leq |E|< q^{(d+1)/{2}}.$
One important point is that  if $|E||F|\leq q^d,$ then  Shparlinski's result
says nothing more than  $|\Delta(E,F)|\geq1.$ The same thing can be said for Dietmann's result,
because his result depends on a strong assumption that $|E||F|\geq (900+\log{q}) q^d.$  In contrast,
Theorem \ref{main1} gives a meaningful information about $|\Delta(E,F)|$ whenever $d\geq 3$ is odd and
$ 8q^{d-1}<|E||F|\leq q^d.$ For example, if $d\geq 3$ is odd, $|E|=q^{(d-1)/2}-1$, and $|F|=
q^{(d+1)/2}$, then $|E||F|=q^d-q^{(d+1)/2} < q^d,$ but $|\Delta(E,F)|\geq q/12.$
\end{remark}

Now, we state and prove our main theorem for even dimensions, which improves Dietmann's result \eqref{Di} (see also Corollary \ref{bett:dietm} below).
\begin{theorem} \label{main2}
(1) Let $d\geq 2$ be even and $E, F\subset \mathbb F_q^d.$ If $|E||F|\geq 16 q^d,$ then we have
$$ |\Delta(E,F)|\geq \left\{
\begin{array}
{ll} \frac{q}{144} \quad&\mbox{for}~~1\le |E|< q^{\frac{d-1}{2}}\\
\frac{1}{144} \min\left\{ q, \frac{|F|}{ 2q^{\frac{d-1}{2}}}\right\} \quad&\mbox{for}~~  q^{\frac{d-1}{2}}\leq |E|< q^{\frac{d+1}{2}}\\
\frac{1}{144}     \min\left\{ q, \frac{2|E||F|}{q^d}\right\} \quad&\mbox{for}~~  q^{\frac{d+1}{2}}\leq
|E|\leq q^{d}.
\end{array}\right.$$ (2) In addition, if $d=2$ and $|E||F|\geq 16 q^2,$ then
$$ |\Delta(E,F)|\geq \frac{1}{72} \min\left\{ \frac{q}{2},~\frac{|E|^{\frac{1}{2}}|F|}{\sqrt{3}q} \right\}. $$
Furthermore, if $d=2$ and $-1\not\in {\mathbb F_q^*}^2$, then
$$ |\Delta(E,F)|\geq \min\left\{ \frac{q}{2}, ~\frac{|E|^{\frac{1}{2}}|F|}{2(\sqrt{3}+1)q} \right\}.$$
\end{theorem}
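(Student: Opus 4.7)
My plan is to combine the distance formula (\ref{Df2}) with the $\ell^2$ bound (\ref{L2}) of Proposition \ref{Lem3}; (\ref{Df2}) is the natural choice in even dimension because of the isotropic obstruction (\ref{exa}). For part (1), I would substitute (\ref{L2}) into (\ref{Df2}) and invoke Proposition \ref{evencor}: its second inequality absorbs the $S_0$-contribution of (\ref{L2}) against $-\nu^2(0)$, producing a residue of at most $q^{-1}|E|^2|F|^2$, while its first inequality bounds $(|E||F|-\nu(0))^2\geq |E|^2|F|^2/36$. The elementary estimate $\tfrac{A}{B+C}\geq \tfrac{1}{2}\min(A/B, A/C)$ then yields
\[
|\Delta(E,F)|\geq \frac{1}{72}\min\Bigl\{\frac{q}{2},\,\frac{|E|^2|F|}{q^{2d}{\mathfrak M}^*(E)}\Bigr\},
\]
and plugging the three piecewise bounds ${\mathfrak M}^*(E)\leq 4q^{-d-1}|E|,\,4q^{-(3d+1)/2}|E|^2,\,q^{-d}|E|$ from Proposition \ref{sph:sum} into the second entry produces the three stated cases. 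In the smallest range the hypothesis $|E||F|\geq 16q^d$ forces $|E||F|/(4q^{d-1})\geq 4q$, so only the $q/2$ entry survives the minimum. The first bound of part (2) is the same argument for $d=2$ with the sharper ${\mathfrak M}^*(E)\leq \sqrt{3}q^{-3}|E|^{3/2}$ from Lemma \ref{restriction} in place of Proposition \ref{sph:sum}, converting the second entry of the minimum into $|E|^{1/2}|F|/(\sqrt{3}q)$.

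For the ``furthermore'' clause the hypothesis $-1\notin{\mathbb F_q^*}^2$ forces $S_0=\{(0,0)\}$, and two simplifications become available: $\sum_{m\in S_0}\overline{\widehat{E}}(m)\widehat{F}(m) = \widehat{E}(0)\widehat{F}(0) = q^{-4}|E||F|$ is explicit, and $\nu(0)=|E\cap F|\leq \sqrt{|E||F|}$ is negligible relative to $|E||F|$ under the hypothesis $|E||F|\geq 16q^2$. I would therefore bypass Proposition \ref{evencor} (which would cost a factor of $36$ in the numerator) and feed these exact values into the raw form of (\ref{Df2}) together with the exact identity obtained in the proof of Proposition \ref{Lem3}, rather than the weakened (\ref{L2}). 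Writing $y:=|E|^{1/2}|F|$ and using $(|E||F|-\nu(0))^2\approx |E|^2|F|^2$, the problem reduces to showing $|\Delta(E,F)|\geq qy/(y+\sqrt{3}q^2)$; elementary algebra separating the cases $y\geq\sqrt{3}q^2$ and $y<\sqrt{3}q^2$ then shows this quantity is at least $\min\{q/2,\,y/(2(\sqrt{3}+1)q)\}$.

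The main obstacle is tracking the precise constant $2(\sqrt{3}+1)$: I must verify that the $q^{-2}|E|^2|F|^2$ term arising from the $S_0$-contribution, together with the correction terms $-(q+1)|E\cap F|^2$ and $-2|E||F||E\cap F|$ involving $|E\cap F|$, collectively consume no more than the slack between $qy/(y+\sqrt{3}q^2)$ and the stated minimum. The hypothesis $|E||F|\geq 16q^2$ inherited from part (2) should provide exactly the margin needed.
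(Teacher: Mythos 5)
Your treatment of part (1) and of the first bound in part (2) is correct and is essentially the paper's own argument: substitute (\ref{L2}) into (\ref{Df2}), use both inequalities of Proposition \ref{evencor} to reach
$|\Delta(E,F)|\geq \frac{|E|^2|F|^2/36}{2q^{-1}|E|^2|F|^2+q^{2d}|F|\,{\mathfrak M}^*(E)}$,
and then insert the piecewise bounds on ${\mathfrak M}^*(E)$ from Proposition \ref{sph:sum} (resp.\ Lemma \ref{restriction} when $d=2$); your constants work out to the stated ones.

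The gap is in the ``furthermore'' clause. First, that claim does not carry the hypothesis $|E||F|\geq 16q^2$: its hypotheses are only $d=2$ and $-1\notin{\mathbb F_q^*}^2$, and the remark following Corollary \ref{bett:dietm} states explicitly that the point of this part is that the assumption $|E||F|\geq 16q^2$ is dropped (it is what gives a nontrivial conclusion already when $|E|^{1/2}|F|>2(\sqrt3+1)q$). Your proposed proof leans on that hypothesis twice — to make $\nu(0)=|E\cap F|\leq\sqrt{|E||F|}$ negligible in $(|E||F|-\nu(0))^2$, and to ``provide exactly the margin needed'' for the constant — so even if completed it would only prove a weaker statement. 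Second, the constant verification itself is deferred rather than done: your reduction to $|\Delta(E,F)|\geq qy/(y+\sqrt3 q^2)$ silently discards the $S_0$-contribution $q^{6}|\widehat{E}(0,0)\widehat{F}(0,0)|^2=q^{-2}|E|^2|F|^2$, which is not negligible; absorbing it is precisely what turns $\sqrt3$ into $\sqrt3+1$ in the final constant. The paper's route avoids all of this: since $S_0=\{(0,0)\}$, the $S_0$-term in (\ref{L2}) equals $q^{-2}|E|^2|F|^2\leq q|E|^{3/2}|F|$ (because $|E|^{1/2}|F|\leq q^3$), so
$\sum_{t\in\mathbb F_q}\nu^2(t)\leq q^{-1}|E|^2|F|^2+(1+\sqrt3)q|E|^{3/2}|F|$,
and one then applies (\ref{Df1}) — not (\ref{Df2}) — so that $\nu(0)$ never enters and no size assumption is needed; the bound $\min\{q/2,\ |E|^{1/2}|F|/(2(\sqrt3+1)q)\}$ follows at once. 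If you switch your last step to (\ref{Df1}) and bound the $S_0$-term this way, your argument closes without the extraneous hypothesis.
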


\begin{proof}
Note that (\ref{L2}) can be rewritten as
\begin{align*}
\sum_{t\in \mathbb F_q^*} \nu^2(t)&\leq q^{-1}|E|^2|F|^2+ q^{2d}|F| {\mathfrak M}^*(E) + q^{3d}
\left|\sum_{m\in S_0} \overline{\widehat{E}}(m) \widehat{F}(m)\right|^2 -\nu(0)^2
\end{align*}
Applying this inequality with ones in Proposition \ref{evencor} to (\ref{Df2}), we obtain that
\begin{equation}\label{shorteven}
|\Delta(E,F)|\geq \frac{|E|^2|F|^2/36}{2q^{-1}|E|^2|F|^2 +  q^{2d}|F| {\mathfrak M}^*(E)}.
\end{equation}
Now, observe that Proposition \ref{sph:sum} implies
\begin{align*}
{\mathfrak M}^*(E) &\leq \left\{\begin{array}{ll} 4 q^{-d-1}|E| \quad &\mbox{if}~~ 1\leq |E|< q^{\frac{d-1}{2}}\\
4 q^{-\frac{3d+1}{2}} |E|^2 \quad &\mbox{if}~~q^{\frac{d-1}{2}}\leq |E|< q^{\frac{d+1}{2}}\\
q^{-d} |E|  &\mbox{if}~~q^{\frac{d+1}{2}}\leq |E| \leq q^d. \end{array} \right.
\end{align*}
Combining this inequality with (\ref{shorteven}) and considering the dominant term in terms of $|E|$,
we obtain from a direct calculation that

$$ |\Delta(E,F)|\geq
\left\{\begin{array}{ll} \frac{1}{144} \min\left\{q, ~\frac{|E||F|}{2 q^{d-1}}\right\}                                             \quad&\mbox{for}~~ 1\leq |E|< q^{\frac{d-1}{2}}\\
\frac{1}{144} \min\left\{ q, ~\frac{|F|}{2q^{\frac{d-1}{2}}}\right \}\quad&\mbox{for}~~ q^{\frac{d-1}{2}} \leq |E|< q^{\frac{d+1}{2}} \\
\frac{1}{144} \min\left \{ q, ~\frac{2|E||F|}{q^d}\right\}\quad&\mbox{for}~~ q^{\frac{d+1}{2}} \leq
|E|\leq q^d. \end{array}\right.$$ Since $|E||F|\geq 16 q^d,$ we conclude the first part of the
theorem.

For the statement (2), suppose that $E, F\subset \mathbb F_q^2$ with $|E||F|\geq 16 q^2.$ Applying
Lemma \ref{restriction} to the inequality (\ref{shorteven}), we conclude that
$$ |\Delta(E,F)|\geq \frac{|E|^2|F|^2}{36(2q^{-1}|E|^2|F|^2 +
\sqrt{3}q|E|^{\frac{3}{2}}|F|)}\geq\frac{1}{72} \min\left\{
\frac{q}{2},~\frac{|E|^{\frac{1}{2}}|F|}{\sqrt{3}q} \right\}$$ and complete the proof.

For the last statement, let $E, F\subset \mathbb F_q^2$. In addition, assume that $-1\in {\mathbb F_q}$
is not a square. Then $S_0=\{(0,0)\}.$ Therefore, it follows from (\ref{L2}) in Lemma \ref{Lem3} that
$$ \sum_{t\in \mathbb F_q} \nu^2(t)\leq q^{-1}|E|^2|F|^2 + q^{6}
\left|\overline{\widehat{E}}(0,0) \widehat{F}(0,0)\right|^2 + q^{4}|F| {\mathfrak
M}^*(E).$$ Since $ \overline{\widehat{E}}(0,0)=q^{-2}|E|$ and
$\widehat{F}(0,0)=q^{-2}|F|,$ an application of  Lemma \ref{restriction} yields that
$$\sum_{t\in \mathbb F_q} \nu^2(t)\leq q^{-1}|E|^2|F|^2 +q^{-2}|E|^2|F|^2 + \sqrt{3}q |E|^{\frac{3}{2}}|F|.$$
Now, observe that $q^{-2}|E|^2|F|^2\leq q |E|^{\frac{3}{2}}|F|$ for $E,F\subset \mathbb F_q^2.$ It
therefore follows that
$$\sum_{t\in \mathbb F_q} \nu^2(t)\leq q^{-1}|E|^2|F|^2 + (1+\sqrt{3})q |E|^{\frac{3}{2}}|F|.$$
By this inequality and (\ref{Df1}) in Lemma \ref{Lem2}, we see that
$$|\Delta(E,F)|\geq \frac{|E|^2|F|^2}{q^{-1}|E|^2|F|^2 + (1+\sqrt{3})q |E|^{\frac{3}{2}}|F|}\geq \min\left\{ \frac{q}{2}, ~\frac{|E|^{\frac{1}{2}}|F|}{2(\sqrt{3}+1)q} \right\}.$$
Thus, the proof is complete.

\end{proof}

Putting Theorem \ref{main1}, \ref{main2} together, we show:
\begin{corollary}\label{bett:dietm}
If $|E||F|\gg q^d$ and $|E|\leq |F|$, then we have
\begin{align*}
|\Delta(E,F)|\gg
\begin{cases}
\min\left\{q,\frac{|F|}{q^{\frac{d-1}{2}}}\right\}&\mbox{ if }d\geq2\\
\min\left\{q,\frac{|E|^{\frac{1}{2}}||F|}{q}\right\}&\mbox{ if }d=2.
\end{cases}
\end{align*}
\end{corollary}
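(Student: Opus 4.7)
The plan is to derive Corollary \ref{bett:dietm} directly from Theorems \ref{main1} and \ref{main2} by a three-case analysis on $|E|$ against the thresholds $q^{(d-1)/2}$ and $q^{(d+1)/2}$, exactly matching the three regimes appearing in the statements of the main theorems. First I would split into odd $d\geq 3$ (Theorem \ref{main1} applies with no lower-bound hypothesis on $|E||F|$) and even $d\geq 2$ (Theorem \ref{main2} requires $|E||F|\geq 16q^d$, but this is absorbed into the implicit constant of the hypothesis $|E||F|\gg q^d$).

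The middle regime $q^{(d-1)/2}\leq|E|<q^{(d+1)/2}$ is immediate: both theorems directly yield $|\Delta(E,F)|\gg\min\{q,|F|/q^{(d-1)/2}\}$, which is exactly the target bound. The upper regime $q^{(d+1)/2}\leq|E|\leq q^d$ is handled by the elementary comparison $|E|/q^d\geq q^{-(d-1)/2}$, whence $|E||F|/q^d\geq|F|/q^{(d-1)/2}$, so the bound $\min\{q,|E||F|/q^d\}$ supplied by the theorems dominates the claim. The lowest regime $1\leq|E|<q^{(d-1)/2}$ is the one requiring a slightly less mechanical observation: combining $|E||F|\gg q^d$ with $|E|<q^{(d-1)/2}$ forces $|F|\gg q^{(d+1)/2}$, and therefore $|F|/q^{(d-1)/2}\gg q$, so that $\min\{q,|F|/q^{(d-1)/2}\}$ collapses to $\gg q$; on the other hand, the theorems give $|\Delta(E,F)|\gg\min\{q,|E||F|/q^{d-1}\}\gg q$ in this range, which matches.

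For the sharpened $d=2$ statement $|\Delta(E,F)|\gg\min\{q,|E|^{1/2}|F|/q\}$, I would simply read off Theorem \ref{main2}(2), which under the hypothesis $|E||F|\geq 16q^2$ delivers exactly this estimate (up to the explicit absolute constant $1/(72\sqrt{3})$). The hypothesis $|E|\leq|F|$ is used only to make the conclusion asymmetric in $E,F$ and to confirm that this $d=2$ bound is indeed a strict refinement of the general $d\geq 2$ bound in the relevant range.

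I expect no serious obstacle; the argument is pure bookkeeping against the already-proved main theorems. The one point that merits care is the lowest regime, where the bound supplied by the theorem is phrased in terms of $|E||F|/q^{d-1}$ rather than $|F|/q^{(d-1)/2}$; one must check that the hypothesis $|E||F|\gg q^d$ is strong enough to saturate both expressions at $q$ simultaneously, which it is.
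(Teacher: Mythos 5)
Your proposal is correct and follows essentially the same route as the paper: the corollary is deduced by pure bookkeeping from Theorems \ref{main1} and \ref{main2}, splitting on the three ranges of $|E|$, noting that in the lowest range $|E||F|\gg q^d$ forces both the theorems' bound and the target $\min\{q,|F|/q^{(d-1)/2}\}$ to be of size $\gg q$, and reading the $d=2$ refinement directly off Theorem \ref{main2}(2) with the constant $16$ absorbed into the implicit constant of the hypothesis. Your write-up is merely more explicit than the paper's terse case-check.
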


\begin{proof}
First, let $d\geq3$ be odd. Due to the hypothesis, it suffices to consider only the first condition in
Theorem \ref{main1}. In this case, we have $\min\{q/2,|E||F|/8q^{d-1}\}\gg q$. Note also that
$|F|/q^{(d-1)/2}\gg q$ since $|F|\gg q^d/|E|\geq q^{(d+1)/2}$. From similar arguments with Theorem
\ref{main2}, we are also able to verify the corollary for even $d\geq 2$. The case of $d=2$ follows
immediately from Theorem \ref{main2}. Hence the proof is finished.
\end{proof}

\begin{remark}
In dimension two, the second part of Theorem \ref{main2} enables us to improve the first part of
Theorem \ref{main2} in the case when $q\leq|E|\leq q^2.$ As mentioned above, if $d\geq 3$ is odd, then
$|\Delta(E,F)|>1$ whenever $8q^{d-1}<|E||F|\leq q^d.$ However, this is not true any more in even
dimensions as observed in the example (\ref{exa}). For this reason, we need the assumption that
$|E||F|\geq 16 q^d$ in Theorem \ref{main2}. The last part of Theorem \ref{main2} says that if we
assume that $d=2$ and $-1\in \mathbb F_q$ is not a square, then we can drop the assumption in the
second part of Theorem \ref{main2} that $|E||F|\geq 16 q^2.$ In this case, we have  nontrivial
distance results whenever $|E|^{\frac{1}{2}}|F|>2(\sqrt{3}+1)q.$
\end{remark}

The following result can be obtained by finding a good upper bound of ${{\mathfrak M}(E)}$ for any product set $E$ in $\mathbb F_q^d.$
\begin{theorem} \label{main3}
Let $d\geq 2.$ Suppose that $E=A\times A\times \cdots \times A \subset \mathbb F_q^d$ is a product set
and $F\subset \mathbb F_q^d.$ Then we have
\begin{equation}\label{weakproduct}
|\Delta(E,F)|\geq \min \left \{ \frac{q}{2}, \frac{|E|^{1-\frac{1}{d}} |F|}{4q^{d-1}} \right\}.
\end{equation}
\end{theorem}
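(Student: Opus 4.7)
The plan is to exploit the product structure of $E = A\times\cdots\times A$ via Proposition \ref{productpro}, which provides a bound on $\mathfrak{M}(E)$ that is strictly sharper than the general estimate in Proposition \ref{sph:sum}. The key observation is that the product structure can be leveraged once: writing $E = \underline{E}\times A$ with $\underline{E} = A^{d-1}\subset \mathbb F_q^{d-1}$, Proposition \ref{productpro} yields
\begin{equation*}
\mathfrak{M}(E) \leq 2q^{-d-1}|A|^2|\underline{E}| = 2q^{-d-1}|A|^{d+1} = 2q^{-d-1}|E|^{\frac{d+1}{d}},
\end{equation*}
where I have used $|A| = |E|^{1/d}$ and $|\underline{E}| = |E|^{(d-1)/d}$.

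From here I would proceed exactly as in the proof of Theorem \ref{main1}. Combining the distance formula \eqref{Df1} in Lemma \ref{Lem2} with the $l^2$ estimate \eqref{L1} in Proposition \ref{Lem3} gives
\begin{equation*}
|\Delta(E,F)|\geq \frac{|E|^2|F|^2}{q^{-1}|E|^2|F|^2 + q^{2d}|F|\mathfrak{M}(E)}.
\end{equation*}
Plugging the above bound on $\mathfrak{M}(E)$ into the denominator, the second term becomes $2q^{d-1}|F||E|^{(d+1)/d}$, and using the elementary inequality $\frac{1}{a+b}\geq \tfrac12\min\{a^{-1},b^{-1}\}$ I obtain
\begin{equation*}
|\Delta(E,F)|\geq \min\left\{\frac{q}{2},\,\frac{|E|^{1-\frac{1}{d}}|F|}{4q^{d-1}}\right\},
\end{equation*}
which is precisely \eqref{weakproduct}.

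Essentially there is no genuine obstacle; the work lies entirely in recognizing that Proposition \ref{productpro} gives a bound of size roughly $q^{-d-1}|E|^{1+1/d}$, which is smaller than the general bound $q^{-d}|E|$ from Proposition \ref{sph:sum} whenever $|A|<q$, and then propagating this improvement through the same dyadic balancing used for Theorem \ref{main1}. The only small care required is to verify that the product splitting $E = A^{d-1}\times A$ correctly converts to the exponent $1-1/d$ on $|E|$ appearing in the final estimate, but this is a one-line computation with $|A| = |E|^{1/d}$.
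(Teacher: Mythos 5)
Your proposal is correct and follows essentially the same route as the paper: it combines \eqref{Df1} with \eqref{L1}, applies Proposition \ref{productpro} to $E=\underline{E}\times A$ with $|A|^2|\underline{E}|=|E|^{1+\frac{1}{d}}$, and balances the two terms in the denominator to reach \eqref{weakproduct}. The arithmetic (the factor $2q^{d-1}$ and the final $\min\{q/2,\,|E|^{1-1/d}|F|/(4q^{d-1})\}$) checks out.
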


\begin{proof}
Let $d\geq 2.$ Suppose that $E=A\times A\times \cdots\times A \subset \mathbb F_q^d$ is a product set
and $F\subset \mathbb F_q^d.$ Combining (\ref{Df1}) with (\ref{L1}),  we have
\begin{equation}\label{productform}
|\Delta(E,F)|\geq \frac{|E|^2|F|^2}{ q^{-1}|E|^2|F|^2 + q^{2d}|F| {\mathfrak M}(E)}.
\end{equation}
Let $E=A\times \dots \times A= \underline{E}\times A\subset \mathbb F_q^{d-1}\times \mathbb F_q.$  Then it is clear that $|A|^2
|\underline{E}|= |E|^{1+{1}/{d}}.$ Hence, it follows from Proposition \ref{productpro} that
 $$ {\mathfrak M}(E) \leq 2q^{-d-1}|E|^{1+\frac{1}{d}}.$$
 Applying this inequality to (\ref{productform}) gives
$$|\Delta(E,F)|\geq \frac{|E|^2|F|^2}{ q^{-1}|E|^2|F|^2
+2q^{d-1} |E|^{1+\frac{1}{d}}|F|}\geq \min \left \{ \frac{q}{2}, \frac{|E|^{1-\frac{1}{d}}
|F|}{4q^{d-1}}\right\}.$$ This completes the proof.
\end{proof}

\begin{remark}
Notice that if $q^{d/2}\leq|E|\leq q^d/2^d$, then the conclusion of Theorem \ref{main3} is
superior to the conclusions of  both Theorem \ref{main1} and \ref{main2}. In particular,  the
conclusion of Theorem \ref{main3} holds true without the assumption of Theorem \ref{main2} that $|E||F|\geq 16 q^{d}$.
In fact, it yields nontrivial results whenever $|E|^{1-{1}/{d}}
|F|>4q^{d-1},$  a weaker condition than $|E||F|\geq 16 q^{d}.$
\end{remark}

\begin{remark}
As shown in the proof of Proposition \ref{productpro} and Theorem \ref{main3},  the assumption that $E=A\times A\times \cdots \times A$
can be replaced by a weaker condition that $E=\underline{E}\times A \subset \mathbb F_q^{d-1}\times
\mathbb F_q$ and $|\underline{E}|=|A|^{d-1}.$ In particular, if we assume that $E=\underline{E}\times
A \subset \mathbb F_q^{d-1}\times \mathbb F_q$ and $|\underline{E}|>|A|^{d-1}$, then  we could obtain
much stronger conclusion than  (\ref{weakproduct}).
\end{remark}

\end{document}